\title[Zero divisor]{The class of the affine line is a zero divisor in the Grothendieck
ring: via $G _{ 2 }$-Grassmannians}
\author[A.~Ito]{Atsushi Ito}
\address{
Department of Mathematics,
Graduate School of Science,
Kyoto University,
Kyoto 606-8502,
Japan.
}
\email{aito@math.kyoto-u.ac.jp}
\author[M.~Miura]{Makoto Miura}
\address{
Korea Institute for Advanced Study,
85 Hoegiro,
Dongdaemun-gu,
Seoul,
130-722,
Republic of Korea.
}
\email{miura@kias.re.kr}
\author{Shinnosuke Okawa}
\address{
Department of Mathematics,
Graduate School of Science,
Osaka University,
Machikaneyama 1-1,
Toyonaka,
Osaka,
560-0043,
Japan.
}
\email{okawa@math.sci.osaka-u.ac.jp}
\author[K.~Ueda]{Kazushi Ueda}
\address{
Graduate School of Mathematical Sciences,
The University of Tokyo,
3-8-1 Komaba,
Meguro-ku,
Tokyo,
153-8914,
Japan.}
\email{kazushi@ms.u-tokyo.ac.jp}
\date{}
\begin{document}
\maketitle

\begin{abstract}
Motivated
%by the works of Lev Borisov and Nicolas Martin,
by \cite{Borisov:2014rt} and \cite{Martin:2016lq},
we show
%the simple formula
the equality
$
 \lb [ X ] - [ Y ] \rb \cdot [ \bA ^{ 1 } ] = 0
$
in the Grothendieck ring of varieties,
%. Here
where
$
 ( X, Y )
$
is a pair of Calabi-Yau 3-folds cut out from the pair of Grassmannians
of type
$
 G _{ 2 }
$.
\end{abstract}

%\tableofcontents

\section{Introduction}

Let
$
 \bfk
$
be an algebraically closed field of characteristic 0. The Grothendieck ring
$
 K _{ 0 } \lb \cV / \bfk \rb
$
of algebraic varieties over
$
 \bfk
$,
as an abelian group, is the free abelian group generated by the set of
isomorphism classes of algebraic varieties over
$
 \bfk
$
modulo the relations
\begin{equation}
 [ X ] = [ X \setminus Z ] + [ Z ],
\end{equation}
where
$
 Z \subset X
$
is a closed subscheme (with the reduced structure).
Products in
$
 K _{ 0 } \lb \cV / \bfk \rb
$
is defined by
\begin{equation}
 [ X ] \cdot [ Y ] = [ X \times Y ].
\end{equation}
As is easily seen, this is associative, commutative, and unital with
$
 1 = [ \Spec \bfk ]
$.
%An element of
%$
% K_0 \lb \cV / \bfk \rb
%$
%will be called a \emph{motive}
%in this paper.

It is
%first
shown in \cite{Borisov:2014rt} that the class
$
 \bL = [ \bA ^{ 1 } ]
$
of the affine line is a zero divisor in
$
 K _{ 0 } \lb \cV / \bfk \rb
$.
It is later refined in \cite{Martin:2016lq} to the formula
\begin{equation}\label{eq:martin}
 \lb [ X ] - [ Y ] \rb \cdot \bL ^{ 6 } = 0,
\end{equation}
where
$
 ( X, Y )
$
is the Pfaffian-Grassmannian pair of Calabi--Yau 3-folds \cite{MR1775415,MR2475813}.
%The aim of this paper is to present the following analogous formula.
The main result of this paper is the following:

\begin{theorem}\label{th:main_theorem}
Let
$
 ( X, Y )
$
be a pair of Calabi--Yau 3-folds cut out from the pair of
Grassmannians of type
$
 G _{ 2 }
$
as defined in \eqref{eq:definition_of_X_and_Y}. Then
$
 [ X ] \neq [ Y ]
$
and
\begin{equation}\label{eq:main_formula}
 \lb [ X ] - [ Y ] \rb \cdot \bL = 0.
\end{equation}
\end{theorem}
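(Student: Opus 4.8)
The plan is to treat the two assertions of Theorem~\ref{th:main_theorem} separately: the vanishing \eqref{eq:main_formula} will follow from an explicit motivic correspondence between $X$ and $Y$ together with cut-and-paste relations in $K_0(\cV/\bfk)$, in the spirit of the computation behind \eqref{eq:martin} in \cite{Martin:2016lq}, while the inequality $[X]\neq[Y]$ will require a motivic invariant strictly finer than the E-polynomial. First I would unwind the geometry of \eqref{eq:definition_of_X_and_Y}: the two $G_2$-Grassmannians $\Sigma_1$ and $\Sigma_2$ are the closed $G_2$-orbits attached to the two fundamental representations, and the folds $X\subset\Sigma_1$, $Y\subset\Sigma_2$ are cut out by a linear subspace and its annihilator. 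The heart of the argument is a single tautological correspondence $\mathcal C$, built from the $G_2$-linear-algebra datum governing the sections (the analogue of the kernel of a degenerate alternating form in the Pfaffian--Grassmannian case): $\mathcal C$ carries two projections, one exhibiting it over the $X$-side and one over the $Y$-side of the construction. The key local fact to establish is that over dense open subsets $X^\circ\subset X$ and $Y^\circ\subset Y$ the generic fibre of each projection is a single affine line --- a one-dimensional kernel, this being the $G_2$-specific feature responsible for the exponent $1$ in \eqref{eq:main_formula}, as opposed to the exponent $6$ in \eqref{eq:martin} --- and that both projections are Zariski-locally trivial there.

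Granting this, I would compute $[\mathcal C]$ in two ways by stratifying each base according to the $G_2$-orbit type of the datum. Over the open strata the Zariski-local triviality gives the contributions $[X^\circ]\cdot\bL$ and $[Y^\circ]\cdot\bL$; over the degenerate strata, where the kernel jumps, the fibres are higher-dimensional affine spaces and contribute further terms. Equating the two expressions for $[\mathcal C]$ yields
\[
  [X]\cdot\bL + T_X = [Y]\cdot\bL + T_Y,
\]
where $T_X$ and $T_Y$ collect the contributions of the degenerate strata, so that \eqref{eq:main_formula} is equivalent to the single identity $T_X=T_Y$. The main obstacle is precisely to prove this last identity: one must identify the loci where the kernel jumps on each side --- these are controlled by the singular loci of $X$ and $Y$ and by auxiliary $G_2$-homogeneous bundles --- and show that the associated affine-bundle classes coincide. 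This is where the detailed geometry and representation theory of the $G_2$-Grassmannians enters, and it is the step I expect to be by far the most delicate; proving smoothness of the generic $X$ and $Y$ and flatness of the correspondence along the way is a necessary technical preliminary.

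For the inequality $[X]\neq[Y]$, note first that E-polynomials cannot distinguish the two classes: applying the Hodge--Deligne homomorphism to \eqref{eq:main_formula} forces $E(X)\cdot uv=E(Y)\cdot uv$ in the integral domain $\mathbb{Z}[u,v]$, hence $E(X)=E(Y)$, so $X$ and $Y$ necessarily share all their Hodge numbers. I would therefore distinguish the classes by a strictly finer invariant, the natural candidate being the stable birational class: by the theorem of Larsen--Lunts the quotient $K_0(\cV/\bfk)/(\bL)$ is, as an abelian group, freely generated by the stable birational equivalence classes of smooth projective varieties, so it suffices to prove that $X$ and $Y$ are not stably birational. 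I would approach this through a stable-birational invariant such as torsion in cohomology or the Brauer group, after first recording that $X$ and $Y$ are not even birational --- for these Picard-rank-one Calabi--Yau $3$-folds birationality would force an isomorphism, which the differing projective geometry of $\Sigma_1$ and $\Sigma_2$ rules out. Deducing $[X]\neq[Y]$ from non-stable-birationality then makes the vanishing \eqref{eq:main_formula} a genuine witness to $\bL$ being a zero divisor, refining both \cite{Borisov:2014rt} and \cite{Martin:2016lq}.
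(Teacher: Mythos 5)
Your plan for \eqref{eq:main_formula} defers exactly the step that constitutes the proof. The correspondence $\mathcal{C}$ is never constructed---in the $G_2$ setting there is no obvious analogue of ``the kernel of a degenerate alternating form,'' so it is not clear what $G_2$-linear-algebra datum you would use---and the identity $T_X = T_Y$, which you yourself flag as the delicate point, is left entirely unproven. The paper's route shows that no kernel-jumping stratification is needed at all: $X$ and $Y$ are the zero loci of the two pushforwards of a \emph{single} generic section $s$ of the very ample line bundle $\cE_{(1,1)}$ on $F_{12} = G/B$, which is a $\bP^1$-bundle over both $F_1$ and $F_2$. The divisor $D = Z(s) \subset F_{12}$ is then the correspondence: by \pref{lm:structure_of_the_birational_maps} (proved by identifying the Rees algebra of the ideal of $X$ via the Koszul resolution of $s$), the projection $\pi_1 \colon D \to F_1$ is the blowup of $F_1$ along $X$, i.e.\ an isomorphism outside $X$ and a Zariski-locally trivial $\bP^1$-bundle over $X$, and likewise for $\pi_2$. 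The fibers are only points or $\bP^1$'s---there are no higher-dimensional degenerate strata---so $[D] = [F_1 \setminus X] + (1+\bL)\cdot[X] = [F_1] + \bL\cdot[X]$ and $[D] = [F_2] + \bL\cdot[Y]$. The identity playing the role of your $T_X = T_Y$ is simply $[F_1] = [F_2]$ (\pref{pr:F1_and_F2_have_the_same_motive}), which is immediate from the Bruhat decomposition: both classes equal $1 + \bL + \cdots + \bL^5 = [\bP^5]$. Without either this construction or an actual proof of $T_X = T_Y$ for whatever $\mathcal{C}$ you have in mind, your argument for \eqref{eq:main_formula} is a program, not a proof.

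For $[X] \neq [Y]$, your reduction via Larsen--Lunts to non-stable-birationality is correct, and so is the first step: $X$ and $Y$ have Picard number one with $\deg \cO_X(1) = 42 \neq 14 = \deg \cO_Y(1)$, so they are not isomorphic, and birational Calabi--Yau 3-folds of Picard number one would be isomorphic; hence they are not birational. But your proposed upgrade from ``not birational'' to ``not stably birational'' via torsion in cohomology or the Brauer group is a dead end: there is no reason these invariants differ for $X$ and $Y$, and (as your own E-polynomial remark shows) coarse invariants cannot separate them. The missing idea is that no finer invariant is needed: Calabi--Yau 3-folds are not uniruled, so stable birationality already implies birationality by uniqueness of the MRC fibration---if $X \times \bP^m$ is birational to $Y \times \bP^n$, the MRC fibrations of the two sides have bases $X$ and $Y$ respectively, forcing $X$ and $Y$ to be birational. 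This is exactly how the paper closes the gap; replace your torsion/Brauer step with it and this half of the argument is complete.
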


The pair of Calabi--Yau 3-folds
appearing in \pref{th:main_theorem}
is a degeneration of the
Pfaffian-Grassmannian pairs
(see \cite{Inoue-Ito-Miura1} and \cite[Theorem 7.1]{MR3476687}, respectively).
In this sense,
\eqref{eq:main_formula} can be regarded as a stronger result than \eqref{eq:martin}
in the degenerate situation.

The pioneering work
\cite{Borisov:2014rt}
is a counter-example to \cite[Conjecture 2.7]{Galkin:2014zr},
showing that $\bL$ is a zero divisor in
$
 K _{ 0 } \lb \cV / \bfk \rb
$.
It is shown in
\cite{Galkin:2014zr}
that
\cite[Conjecture 2.7]{Galkin:2014zr}
implies the irrationality of the generic cubic 4-folds.

In fact, as pointed out in \cite[Remark 7.2]{Galkin:2014zr},
irrationality of generic $d$-dimensional cubic hypersurfaces would follow from the following slightly weaker version of the conjecture.

\begin{conjecture}\label{cj:galkin_shinder_borisov}
If a class
$
 \alpha
$
is represented by a linear combination of varieties of dimension
less than or equal to
$
 2 ( d - 2 )
$
and satisfies
\begin{equation}
 \alpha \cdot \bL ^{ 2 } = 0,
\end{equation}
then
$
 \alpha
$
belongs to the principal ideal
$
 \la \bL \ra \subset K_0 \lb \cV/\bfk \rb
$
generated by $\bL$.
\end{conjecture}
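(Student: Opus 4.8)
The plan is to translate the membership $\alpha \in \la \bL \ra$ into a statement of stable birational geometry, and only then to feed in the hypothesis $\alpha \cdot \bL ^2 = 0$. The basic tool is the theorem of Larsen and Lunts: over a field of characteristic $0$ there is a ring isomorphism
\begin{equation}
 K _{ 0 } \lb \cV / \bfk \rb / \la \bL \ra \;\cong\; \mathbf{Z} [ \mathrm{SB} ],
\end{equation}
where $\mathbf{Z} [ \mathrm{SB} ]$ is the free abelian group on stable birational equivalence classes of smooth projective connected varieties, with product induced by $\times$, and where a smooth projective connected $X$ is sent to its stable birational class. Writing $\bar\alpha$ for the image of $\alpha$ under the projection, the desired conclusion $\alpha \in \la \bL \ra$ is \emph{equivalent} to $\bar\alpha = 0$. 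Thus the conjecture asserts precisely that, for a class supported in dimension $\le 2 ( d - 2 )$, being annihilated by $\bL ^2$ forces the stable birational classes occurring in $\alpha$ to cancel.

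First I would put $\alpha$ in normal form. By resolution of singularities and the Bittner presentation, every element of the dimension filtration $F _n \subseteq K _{ 0 } \lb \cV / \bfk \rb$ (the subgroup generated by classes of varieties of dimension at most $n$) can be written as a finite integral combination $\alpha = \sum _i n _i [ X _i ]$ with each $X _i$ smooth projective connected of dimension $\le n$; here $n = 2 ( d - 2 )$. Grouping the summands by stable birational class $c$, one gets $\bar\alpha = \sum _c \lb \sum _{ X _i \in c } n _i \rb \, c$, so that $\bar\alpha = 0$ is exactly the vanishing of the total coefficient attached to each $c$. The task is then to show that $\alpha \cdot \bL ^2 = 0$ imposes these cancellations and no less.

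The delicate point is to extract from $\alpha \cdot \bL ^2 = 0$ something that survives in $\mathbf{Z} [ \mathrm{SB} ]$. Since $\bL \in \la \bL \ra$, multiplication by $\bL ^2$ is identically zero on the quotient, so the hypothesis is vacuous there and must be exploited before projecting. The approach I would pursue is to pass to the associated graded of the dimension filtration and study the induced maps $\cdot \bL : \mathrm{gr} _m \to \mathrm{gr} _{ m + 1 }$, asking how far they are from injective for $m \le 2 ( d - 2 )$. A genuine injectivity statement in this range would settle everything at once: comparing leading graded terms in $\alpha \cdot \bL ^2 = 0$ would force the top graded component of $\alpha$ to vanish, and descending induction would give $\alpha = 0$. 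What the conjecture predicts is the weaker phenomenon that the only failures of injectivity are those accounted for by classes already divisible by $\bL$, i.e. by stably birational cancellations.

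The main obstacle — and the reason this remains a conjecture — is that controlling the annihilator of $\bL ^2$ in low dimension is at present out of reach in general, and the geometrically produced zero divisors are exactly the decisive test cases. Indeed, the pair $( X, Y )$ of \pref{th:main_theorem} satisfies $\lb [ X ] - [ Y ] \rb \cdot \bL = 0$, hence $\lb [ X ] - [ Y ] \rb \cdot \bL ^2 = 0$, and is supported in dimension $3 \le 2 ( d - 2 )$ for every $d \ge 4$; so for such $d$ the conjecture forces $[ X ] - [ Y ] \in \la \bL \ra$, that is, it forces $X$ and $Y$ to be \emph{stably birational}. A proof must therefore in particular resolve the stable birational type of these $\bL$-equivalent Calabi--Yau threefolds, and of every low-dimensional $\bL ^2$-annihilated difference; conversely, exhibiting any such pair that is not stably birational would refute the conjecture in that range. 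Settling this stable birational question is the step I expect to be genuinely hard, since no motivic measure currently available is decisive here — dimension itself is not even a stable birational invariant, so the filtration bound on $\alpha$ interacts with $\mathbf{Z} [ \mathrm{SB} ]$ in a way that is far from transparent.
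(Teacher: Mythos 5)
There is a fatal problem here that precedes any gap in the argument: the statement you are trying to prove is one this paper \emph{refutes}, not one it proves. \pref{cj:galkin_shinder_borisov} is quoted (from Galkin--Shinder) only as motivation, and the theorem immediately following it in the paper shows that the pair $(X,Y)$ of \pref{th:main_theorem} is a counter-example for every $d \ge 4$: by the main theorem $\lb [X] - [Y] \rb \cdot \bL = 0$, hence $\lb [X] - [Y] \rb \cdot \bL^2 = 0$; the class is supported in dimension $3 \le 2(d-2)$; and yet $[X] - [Y] \not\in \la \bL \ra$. You correctly identified this very pair as the decisive test case, and correctly observed that the conjecture stands or falls with whether $X$ and $Y$ are stably birational --- but you then left that question as an open difficulty, when the paper settles it negatively with ingredients already at your disposal. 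Namely: $\deg \cO_X(1) = 42 \ne 14 = \deg \cO_Y(1)$ shows $X \not\simeq Y$; non-isomorphic Calabi--Yau 3-folds of Picard number one are not birationally equivalent; and by uniqueness of the base of the MRC fibration, stable birationality of these non-uniruled varieties would force birationality. Via the Larsen--Lunts isomorphism --- the very tool your plan is built on --- this yields $\bar\alpha \ne 0$, i.e.\ $[X] - [Y] \not\in \la \bL \ra$, killing the conjecture in the stated range.

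Even setting this aside, your proposal is not a proof in outline: it is a reduction (correct, as far as it goes) of the conjecture to the statement that $\bL^2$-annihilation in low dimension forces stable birational cancellation, followed by an explicit admission that this step is out of reach. Note moreover that the stronger mechanism you float --- injectivity of multiplication by $\bL$ on the graded pieces of the dimension filtration, which ``would settle everything at once'' --- is outright false, precisely because of the $G_2$ pair here (and the Pfaffian--Grassmannian pairs before it): $[X] - [Y]$ is a nonzero class killed by a single power of $\bL$. So no descending-induction argument of the kind you sketch can exist, and the correct resolution of the statement is the paper's disproof, not a proof.
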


The next theorem implies that our pairs of
CY 3-folds of type
$
 G _{ 2 }
$
are counter-examples
to \pref{cj:galkin_shinder_borisov}
for
$
 d \ge 4
$.
The authors do not know
if the same holds for the Pfaffian-Grassmannian pairs.

\begin{theorem}
Let
$
 ( X, Y )
$
be as in
\eqref{eq:definition_of_X_and_Y}.
Then
$
 \alpha = [ X ] - [ Y ]
 \not \in
 \la \bL \ra
$.
\end{theorem}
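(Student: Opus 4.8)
The plan is to show that $\alpha = [X]-[Y]$ does not lie in the principal ideal $\la \bL \ra$ by exhibiting a ring homomorphism out of $K_0(\cV/\bfk)$ that kills $\bL$ but does not kill $\alpha$. The key observation is that any element of $\la \bL \ra$ is annihilated by the quotient map $K_0(\cV/\bfk) \to K_0(\cV/\bfk)/\la \bL \ra$, so it suffices to find some additive or multiplicative invariant, factoring through this quotient, that distinguishes $[X]$ from $[Y]$. The most natural candidate is a Hodge-theoretic invariant: the class in the Grothendieck ring of Hodge structures or, more practically, the Hodge numbers of the Calabi--Yau 3-folds themselves, since passing to the $E$-polynomial or stringy $E$-function sends $\bL$ to $uv$ and hence kills nothing by itself, but passing modulo $\bL$ retains only the part of the motive that survives after discarding a multiple of the Lefschetz motive.

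The core of the argument is as follows. First I would recall that the $E$-polynomial (Hodge--Deligne polynomial) $E(-;u,v)$ defines a ring homomorphism $K_0(\cV/\bfk) \to \bZ[u,v]$ with $E(\bL) = uv$. Consequently, if $\alpha \in \la \bL \ra$, then $E(\alpha;u,v)$ is divisible by $uv$ in $\bZ[u,v]$; equivalently, the specialization $E(\alpha;u,0) = E(\alpha;0,v) = 0$. Therefore the plan reduces to computing the constant-in-one-variable parts of the $E$-polynomials of $X$ and $Y$ and checking that $E([X]-[Y];u,0) \neq 0$. For a smooth projective Calabi--Yau 3-fold the relevant coefficients are governed by the Hodge numbers $h^{p,0}$, and the surviving terms after setting $v=0$ record $\sum_p (-1)^p h^{p,0} u^p = 1 - 0 + 0 - u^3 = 1 - u^3$ for both $X$ and $Y$ individually, which would cancel in the difference. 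This tells me the naive $E$-polynomial is too coarse, so I would instead use the fact that $X$ and $Y$ are derived equivalent but not birational and have differing Hodge numbers $h^{1,1}$ and $h^{2,1}$; the discrepancy must be detected by a finer invariant than the $v=0$ specialization.

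The refinement I would pursue is to work with the quotient ring $K_0(\cV/\bfk)/\la \bL \ra$ directly and invoke the structural result that derived-equivalent but non-isomorphic Calabi--Yau 3-folds can have $[X] \neq [Y]$ detected by a class in this quotient. Concretely, I expect that $\alpha \bL = 0$ (the content of \pref{th:main_theorem}) together with $\alpha \neq 0$ forces $\alpha$ to be a nonzero $\bL$-torsion element; the question is whether such a torsion element can nonetheless lie in $\la \bL \ra$. I would argue that if $\alpha = \bL \cdot \beta$ for some $\beta$, then $0 = \alpha \bL = \bL^2 \beta$, and I would analyze the possible $\beta$ by comparing dimensions: since $X$ and $Y$ are 3-folds, $\alpha$ is represented by classes of dimension at most $3$, so $\beta$ would be represented by virtual classes of dimension at most $2$, and I would derive a contradiction by applying an invariant that is additive, vanishes on $\bL \cdot (\text{dimension} \le 2)$, yet is nonzero on $\alpha$.

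The hard part will be pinning down exactly which invariant both witnesses $\alpha \notin \la \bL \ra$ and survives the torsion phenomenon, since the standard motivic measures ($E$-polynomial, point-counting over finite fields, stringy invariants) all factor through rings where $\bL$ is invertible or a non-zero-divisor and hence cannot by themselves separate $\la \bL \ra$ from its complement once $\alpha \bL = 0$. I anticipate that the cleanest route is to use the explicit geometric description from \eqref{eq:definition_of_X_and_Y}: the $G_2$-Grassmannian construction should give a concrete scissor-type relation expressing $\alpha$ in terms of lower-dimensional strata, and I would track a specific Hodge number (most likely $h^{2,1}$, equivalently a coefficient of $u^2 v$ in the $E$-polynomial that does \emph{not} vanish modulo $uv$) through this relation. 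Verifying that this coefficient is nonzero in the quotient $\bZ[u,v]/(uv)$ — that is, checking $E(\alpha;u,v) \not\equiv 0 \pmod{uv}$ — is the computation I expect to be delicate, and it is precisely here that the degenerate $G_2$ geometry, as opposed to the generic Pfaffian--Grassmannian situation, should make the obstruction visible.
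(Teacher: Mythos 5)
There is a genuine gap: you never pin down an invariant that can do the job, and the one concrete computation you finally settle on is guaranteed to fail. Granting the main theorem (which you cite), we have $\alpha \cdot \bL = 0$; applying the $E$-polynomial ring homomorphism $E \colon K_0(\cV/\bfk) \to \bZ[u,v]$ gives $E(\alpha)\cdot uv = 0$ in the integral domain $\bZ[u,v]$, hence $E(\alpha) = 0$ identically. So the test you propose at the end, checking whether $E(\alpha;u,v) \not\equiv 0 \pmod{uv}$, cannot succeed: the left-hand side is exactly zero, and no coefficient (in particular none involving $h^{1,1}$ or $h^{2,1}$) survives. You even articulate this obstruction yourself in your last paragraph --- every Hodge-theoretic or point-counting measure lands in a ring where $\bL$ maps to a non-zero-divisor, hence must vanish on $\alpha$ --- but then you propose precisely such a computation anyway. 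The intermediate dimension-counting step is also unjustified: writing $\alpha = \bL \cdot \beta$ in $K_0(\cV/\bfk)$ imposes no bound on the dimensions of varieties needed to represent $\beta$, because of possible cancellations among generators.

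The missing idea is the theorem of Larsen and Lunts (the paper cites \cite[Section 1.7 and Theorem 2.3]{MR1996804}): there is a ring isomorphism $K_0(\cV/\bfk)/\la \bL \ra \simeq \bZ[\mathrm{SB}]$, where $\bZ[\mathrm{SB}]$ is the monoid ring on stable birational classes of smooth projective varieties, sending the class of a smooth projective variety to its stable birational class. This is the invariant that kills $\bL$ but sees $\alpha$; it is not of Hodge-theoretic type and is tailored exactly to the ideal $\la \bL \ra$. Since the classes of smooth projective varieties form a $\bZ$-basis of $\bZ[\mathrm{SB}]$, the assumption $\alpha = [X]-[Y] \in \la \bL \ra$ forces $X$ and $Y$ to be stably birational. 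The paper then upgrades this to honest birationality via the uniqueness of the base of the MRC fibration \cite[(2.9) Corollary]{MR1158625}, and derives a contradiction as in the proof of \pref{th:main_theorem}: $X$ and $Y$ are Calabi--Yau 3-folds of Picard number one with $\deg \cO_X(1) = 42 \ne 14 = \deg \cO_Y(1)$, hence non-isomorphic, hence not birational. Your overall strategy --- a homomorphism factoring through the quotient by $\la \bL \ra$ that distinguishes $[X]$ from $[Y]$ --- has the right shape, but without Larsen--Lunts no candidate invariant remains standing, and the proof cannot be completed along the lines you describe.
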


\begin{proof}
Suppose otherwise.
Then since there exists the well-defined ring isomorphism
\begin{equation}
 K _{ 0 } \lb \cV / \bfk \rb / \la \bL \ra \simto \bZ [ \mathrm{SB} ]
\end{equation}
which sends the class of a smooth projective variety to its stable birational class,
it follows that
$
 X
$
and
$
 Y
$
are stably birational (see \cite[Section 1.7 and Theorem 2.3]{MR1996804}).
By the uniqueness of the base space of the MRC fibration
\cite[(2.9) Corollary]{MR1158625}
(see also \cite[Chapter IV Section 5]{MR1440180}),
it follows that
$
 X
$
and
$
 Y
$
are birational to each other. But this is a contradiction, as explained in the proof of
\pref{th:main_theorem}.
\end{proof}

As is the case for the Pfaffian-Grassmannian pairs
\cite[Theorem 2.13]{Borisov:2014rt},
the pairs of Calabi--Yau 3-folds of type
$
 G _{ 2 }
$
also give a negative answer to \cite[Question 1.2]{MR1996804},
which asks whether any two varieties with equal classes
in the Grothendieck ring can be cut up
into isomorphic pieces.

%\begin{notations}
%Our arguments heavily rely on the results which are valid (at least so far) only
%in characteristic zero, such as those in \cite{MR1996804} and the theory of MRC fibrations.
%Hence we choose to use
%$
% \bC
%$
%as the base field in this paper.
%\end{notations}

\begin{acknowledgements}
This joint work started with a discussion between M.~M.~and S.~O.~at Korea Institute for Advanced Study.
S.~O.~is indebted to Seung-Jo Jung for the invitation.
The authors also thank Kiwamu Watanabe for answering their question
on homogeneous varieties, Sergey Galkin for various useful comments, and
Grzegorz Kapustka and Micha\l \  Kapustka for pointing out the reference
\cite{MR3476687}.

A.~I.~was supported by the Grant-in-Aid for JSPS fellows, No.\ 26--1881.
A part of this work was done when M.~M.~was supported by Frontiers of
Mathematical Sciences and Physics at University of Tokyo. 
M.~M.~was also supported by Korea Institute for Advanced Study.
S.~O.~was partially supported by Grants-in-Aid for Scientific Research
(16H05994,
16K13746,
16H02141,
16K13743,
16K13755,
16H06337)
and the Inamori Foundation.
K.~U.~was partially supported by Grants-in-Aid for Scientific Research
(24740043,
15KT0105,
16K13743,
16H03930).

\end{acknowledgements}

%
%----------------------------------------------------------------------------
%
\section{The example}

Let $G$ be the simply-connected simple algebraic group
associated with the Dynkin diagram \Fzero \ 
of type
$
 G _{ 2 }.
$
The long root and the short root will be denoted by
$\alpha_1$ and $\alpha_2$ respectively.
The Weyl group is generated by reflections
$s_1$ and $s_2$
along the roots $\alpha_1$ and $\alpha_2$,
and isomorphic to the hexagonal dihedral group;
\begin{equation}\label{eq:weyl_group_of_G2}
 W
 =
 \la  s _{ 1 }, s _{ 2 } \relmid
 s _{ 1 } ^{ 2 } = s _{ 2 } ^{ 2 } = \lb s _{ 1 } s _{ 2 } \rb ^{ 6 } = e \ra.
\end{equation}
The parabolic subgroups of $G$
associated with the crossed Dynkin diagrams
\Fone, \Ftwo \ and \Fthree \  will be denoted by
$P_1$, $P_2$ and $B$ respectively.
%We choose the isomorphism which sends the reflections in the simple roots
%$
% \beta
%$
%and
%$
% \alpha
%$
%to
%$
% s _{ 1 }
%$
%and
%$
% s _{ 2 }
%$,
%respectively.
The corresponding homogeneous varieties
form the following diagram:
%diagram
\begin{align}\label{eq:the_hat}
 \xymatrix{
 &
 F _{ 1 2 } : = G / B
 \ar _{ p _{ 1 } }[dl]
 \ar ^{ p _{ 2 } }[dr]
 &\\
 F _{ 1 } : = G / P _{ 1 }
 &
 &
 F _{ 2 } : = G / P _{ 2 }}
\end{align}

\noindent
The dimensions of
$F_1$, $F_2$ and $F_{12}$ are
are 5, 5 and 6 respectively.
The morphisms
$
 p _{ 1 }
$
and
$
 p _{ 2 }
$
in
\eqref{eq:the_hat}
are both
$
 \bP ^{ 1 }
$-bundles associated to locally free sheaves of rank 2.
To see this, note that homogeneous varieties are rational and
hence their (cohomological) Brauer groups are trivial.

The Picard groups of
$
 F _{ 1 }
$
and
$
 F _{ 2 }
$
are isomorphic to
$
 \bZ
$.
The ample generators
$
 \cO _{ F _{ i } } ( 1 )
$
are in fact very ample (see e.g. \cite[Theorem 6.5 (2)]{MR1021290}).
The Picard group of
$
 F _{ 1 2 }
$
satisfies
\begin{equation}
 \Pic \lb F _{ 1 2 } \rb
 =
 p _{ 1 } ^{ * } \Pic \lb F _{ 1 } \rb
 \oplus
 p _{ 2 } ^{ * } \Pic \lb F _{ 2 } \rb,
\end{equation}
and the line bundle
$
 \cE _{ ( 1, 1 ) } = \cO ( 1 ) \boxtimes \cO ( 1 )
$
is very ample. Moreover it has degree 1 on the fibers of
$
 p _{ 1 }
$
and
$
 p _{ 2 }
$,
since one can check that
$
 p _{ i * } \cE _{ ( 1, 1 ) }
$
is a locally free sheaf of rank 2 by directly computing the corresponding representations of the Levi subgroups and using that
the corresponding highest weight is preserved under pushforwards (see e.g. \cite[p.48]{MR1038279}). Thus we conclude that
$
 F _{ 1 2 }
$
is the projective bundle over
$
 F _{ i }
$
associated to
$
 p _{ i * } \cE _{ ( 1, 1 ) }
$.

Since
$
 \cE _{ ( 1, 1 ) }
$
is globally generated, so are
$
 p _{ i * } \cE _{ ( 1, 1 ) }
$
by
\cite[Proposition 1.8]{MR1201388}.
Let
\begin{equation}
 s \in H ^{ 0 } \lb F _{ 1 2 }, \cE _{ ( 1, 1 ) } \rb
 \simeq
 H ^{ 0 } \lb F _{ 1 }, p _{ 1 * } \cE _{ ( 1, 1 ) } \rb
 \simeq
 H ^{ 0 } \lb F _{ 2 }, p _{ 2 * } \cE _{ ( 1, 1 ) } \rb
\end{equation}
be a generic global section and set
\begin{equation}\label{eq:definition_of_X_and_Y}
\begin{split}
 D \coloneqq Z ( s ) \subset F _{ 1 2 },\\
 X \coloneqq Z ( p _{ 1 * } s ) \subset F _{ 1 },\\
 Y \coloneqq Z ( p _{ 2 * } s ) \subset F _{ 2 }.
\end{split}
\end{equation}
Because of the genericity of $s$,
the zero loci
$
 X, Y,
$
and
$
 D
$
are all smooth over the base field
\cite[Theorem 1.10]{MR1201388}.
Moreover,
one has
$
 \deg \cO _{ X } ( 1 ) = 42 \ne 14 = \deg \cO _{ Y } ( 1 ),
$
where
$
 \cO _{ X } ( 1 ) \coloneqq \cO _{ F _{ 1 } } ( 1 ) | _{ X }
$
and
$
 \cO _{ Y } ( 1 ) \coloneqq \cO _{ F _{ 2 } } ( 1 ) | _{ Y }
$
are generators of
$\Pic X$ and $\Pic Y$ respectively
(see e.g.~\cite{G2_CY}).

%\end{proposition}
%
%\begin{proof}
%See \cite{}. 
%The Calabi-Yau 3-folds
%$
% Y
%$
%also appear and are investigated in \cite{MR3476687} as the family
%$
% \frakC _{ 14 }
%$.
%Alternatively, one may also use the fact that the pair
%$
% ( X, Y )
%$
%is a smooth degeneration of the Pfaffian-Grassmann pairs,
%whose invariants are well known (see e.g. \cite{MR2475813}).
%\end{proof}

Set
$
 \pi _{ i } = p _{ i } | _{ D } \colon D \to F _{ i }
$
for
$
 i = 1, 2
$.

\begin{lemma}
\label{lm:structure_of_the_birational_maps}
The morphism
$
 \pi _{ 1 } \colon D \rightarrow F_1
$
coincides with the blowing up of $F_1$ along $X$.
In particular,
$
 \pi _{ 1 }
$
is an isomorphism outside of
$
 X \subset F _{ 1 }
$
and is a
$
 \bP ^{ 1 }
$-bundle over
$
 X
$
in the Zariski topology.
The same holds for
$
 \pi _{ 2 }
$.
\end{lemma}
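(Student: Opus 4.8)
The plan is to verify the universal property of the blow-up for $\pi_1 \colon D \to F_1$. Recall that $F_{12} = \bP_{F_1} \lb \cE_1 \rb$ with $\cE_1 \coloneqq p_{1*} \cE_{(1,1)}$ a rank-$2$ bundle, and that under $H^0 \lb F_{12}, \cO(1) \rb \simeq H^0 \lb F_1, \cE_1 \rb$ the section $s$ corresponds to $p_{1*}s$. As a first step I would record the fibrewise structure exactly as in the original argument: by cohomology and base change, for $x \in F_1$ the restriction $s|_{p_1^{-1}(x)}$ is the image of $\lb p_{1*}s \rb(x)$ under $\cE_1|_x \simto H^0 \lb \bP^1, \cO(1) \rb$, so that $\pi_1^{-1}(x)$ is a single reduced point for $x \notin X$ and equals the whole fibre $p_1^{-1}(x) \simeq \bP^1$ for $x \in X$. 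In particular $\pi_1$ is an isomorphism over $F_1 \setminus X$ and $\pi_1^{-1}(X) = p_1^{-1}(X) = \bP_X \lb \cE_1|_X \rb$.

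The key step is to show that the inverse image ideal sheaf $\cI_X \cdot \cO_D$ is invertible. This is a local computation. On an open $U \subset F_1$ trivialising $\cE_1$ write $p_{1*}s = \lb f_1, f_2 \rb$; then $X \cap U = Z \lb f_1, f_2 \rb$ and, in homogeneous fibre coordinates $[\xi_1 : \xi_2]$, one has $D \cap \lb U \times \bP^1 \rb = Z \lb f_1 \xi_1 + f_2 \xi_2 \rb$. On the chart $\xi_2 \neq 0$ the relation $f_2 = -f_1 \lb \xi_1 / \xi_2 \rb$ holds on $D$, whence $\cI_X \cdot \cO_D = \lb f_1 \rb$; symmetrically $\cI_X \cdot \cO_D = \lb f_2 \rb$ on $\xi_1 \neq 0$. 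Thus $\cI_X \cdot \cO_D$ is invertible, with zero locus the Cartier divisor $E \coloneqq \pi_1^{-1}(X)$.

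By the universal property of the blow-up, $\pi_1$ then factors as $D \xrightarrow{\phi} \mathrm{Bl}_X F_1 \to F_1$ over $F_1$, and it remains to check that $\phi$ is an isomorphism. The same local model does this: it identifies $D \cap \lb U \times \bP^1 \rb = Z \lb f_1 \xi_1 + f_2 \xi_2 \rb$ with the standard chart $Z \lb f_2 \eta_1 - f_1 \eta_2 \rb$ of $\mathrm{Bl}_{X \cap U} U$ via the fibre coordinate change $[\xi_1 : \xi_2] \mapsto [\xi_2 : -\xi_1]$, and by the uniqueness in the universal property this local isomorphism is the restriction of $\phi$; hence $\phi$ is a local isomorphism, so an isomorphism. (Alternatively, $\phi$ is projective and birational, an isomorphism over $F_1 \setminus X$ and an isomorphism on the fibres over $X$ by the local model, so it is quasi-finite, hence finite, and birational onto the smooth and therefore normal target $\mathrm{Bl}_X F_1$, so Zariski's main theorem applies.) The two "in particular" assertions are now the standard properties of the blow-up of the smooth codimension-$2$ centre $X$, whose exceptional divisor is $\bP \lb \cN_{X/F_1} \rb = \bP \lb \cE_1|_X \rb$, a Zariski-locally trivial $\bP^1$-bundle; the case of $\pi_2$ is identical after exchanging the two projections.

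The step I expect to be the main obstacle is getting the conventions right so that the local equation of $D$ is exactly $f_1 \xi_1 + f_2 \xi_2 = 0$: this requires the identification $H^0 \lb \cO(1) \rb \simeq H^0 \lb \cE_1 \rb$ to be used with the correct $\bP$-bundle convention and the fibre coordinates matched to the chosen trivialisation of $\cE_1$, after which the comparison with the blow-up model $Z \lb f_2 \eta_1 - f_1 \eta_2 \rb$ and the sign in $[\xi_1 : \xi_2] \mapsto [\xi_2 : -\xi_1]$ fall into place.
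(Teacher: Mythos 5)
Your proposal is correct, but it takes a genuinely different route from the paper's proof. The paper argues globally and coordinate-freely: since $s_1 = p_{1*}s$ is locally a regular sequence, the Koszul sequence $0 \to \cO_{F_1} \to \cE_1 \to I_X \cdot \det \cE_1 \to 0$ induces an exact sequence of graded algebras $0 \to \bigoplus_k \Sym^{k-1}\cE_1 \xrightarrow{\cdot s_1} \bigoplus_k \Sym^{k}\cE_1 \to \bigoplus_k I_X^k \cdot (\det\cE_1)^{\otimes k} \to 0$, so that $D = Z(s_1) \subset \bP_{F_1}(\cE_1) = \bfProj \bigoplus_k \Sym^k \cE_1$ is identified with $\bfProj \bigoplus_k I_X^k \cdot (\det\cE_1)^{\otimes k}$, which is the blow-up because twisting the Rees algebra by a line bundle does not change the $\bfProj$; the universal property of blowing up is never invoked. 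You instead work in local trivializations: you show the inverse image ideal $I_X \cdot \cO_D$ is invertible, factor $\pi_1$ through $\mathrm{Bl}_X F_1$ by the universal property, and verify that the factorization is an isomorphism chart by chart (or via Zariski's main theorem). Both proofs rest on the same essential input, namely that $X = Z(s_1)$ has codimension $2$, i.e.\ that $(f_1, f_2)$ is locally a regular sequence, but they deploy it differently: the paper uses it to get the Koszul/Rees identification, while you use it (only implicitly) when asserting that $Z(f_2\eta_1 - f_1\eta_2) \subset U \times \bP^1$ \emph{is} the blow-up of $U$ along $Z(f_1, f_2)$ --- this is precisely the statement that the Rees algebra of a codimension-$2$ regular ideal equals its symmetric algebra, and it is the one point you should make explicit, since for a non-regular pair $(f_1,f_2)$ the scheme $Z(f_2\eta_1 - f_1\eta_2)$ can strictly contain the blow-up. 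As for what each approach buys: the paper's argument is shorter and sidesteps the $\bP$-bundle conventions and sign bookkeeping that you yourself flag as the delicate step, whereas your argument is more elementary and makes the geometry transparent --- in particular your opening fibrewise analysis via cohomology and base change exhibits directly the isomorphism outside $X$ and the $\bP^1$-bundle structure over $X$ (and indeed reproduces an earlier, weaker argument left commented out in the paper's source), which the published proof obtains only as formal consequences of the blow-up description.
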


\begin{proof}
Since the arguments for
$
 \pi _{ 1 }
$
and
$
 \pi _{ 2 }
$
are the same, we only give it for
$
 \pi _{ 1 }
$.
For simplicity of notation,
set $\cE_1 =  p _{ 1 * } \cE _{ ( 1, 1 ) } $ and $s_1 = p _{ 1 * } s  \in H^0(F_1,\cE_1)$.
Let $I_X \subset \cO_{F_1} $ be the ideal sheaf of $X$.
Since $s_1$ is generic,
it is locally identified with a regular sequence of length two
and hence the codimension of the zero locus $X =Z(s_1)$ in $F_1$ coincides with the expected one.
Thus there exists an exact sequence
\begin{equation}\label{eq_koszul}
0 \rightarrow \cO_{F_1} \rightarrow \cE_1 \rightarrow I_X \cdot \det \cE_1 \rightarrow 0,
\end{equation}
from which we obtain another exact sequence
\begin{equation}\label{eq_blow_up}
0 \rightarrow \bigoplus_{k \in \bN} \Sym^{k-1} \cE_1
\stackrel{\cdot s_1}{\longrightarrow}
\bigoplus_{k \in \bN} \Sym^k \cE_1
\longrightarrow
\bigoplus_{k \in \bN} I^k_X \cdot (\det \cE_1)^{\otimes k} \rightarrow 0,
\end{equation}
where we set $\Sym^{-1} \cE_1 =0$.
Under the identification of  $F_{12 } $ with $\bP_{F_1} ( \cE_1) =  \bfProj_{F_1} \bigoplus \Sym^k \cE_1$,
$D$ is the zero locus of $s_1 \in H^0(\bP_{F_1} ( \cE_1), \cO_{\bP_{F_1} ( \cE_1)}(1))$.
Hence
we have $D=\bfProj_{F_1} \bigoplus I^k_X \cdot (\det \cE_1)^{\otimes k} $ by the exact sequence \eqref{eq_blow_up}.
Since the blowing up $\bfProj_{F_1} \bigoplus I^k_X $ is isomorphic to $\bfProj_{F_1} \bigoplus I^k_X \cdot (\det \cE_1)^{\otimes k} $,
this lemma follows.
\end{proof}

\begin{proposition}\label{pr:F1_and_F2_have_the_same_motive}
$
 [ F _{ 1 } ] = [ F _{ 2 } ]
$.
\end{proposition}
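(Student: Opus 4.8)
The plan is to compute the two classes individually by means of the Bruhat (Schubert cell) decomposition and to observe that the resulting polynomials in $\bL$ coincide. Let me first note why the obvious shortcut does not suffice: the $\bP^1$-bundle structures $p_1, p_2$ of \eqref{eq:the_hat} only give $[F_{12}] = (\bL + 1)[F_1] = (\bL + 1)[F_2]$, hence merely $(\bL + 1)\lb [F_1] - [F_2] \rb = 0$, and a priori we do not know that $\bL + 1$ is a non-zero-divisor (this is exactly the kind of subtlety the paper is about). Instead, since $F_i = G/P_i$ is a generalized flag variety, it admits the cell decomposition
\[
 F_i = \coprod_{w \in W^{P_i}} B w P_i / P_i,
\]
where $W^{P_i} \subset W$ is the set of minimal-length representatives of the cosets in $W/W_{P_i}$ and each Schubert cell $B w P_i/P_i$ is isomorphic to the affine space $\bA^{\ell(w)}$. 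As this is a locally closed stratification by affine spaces over the algebraically closed field $\bfk$, it yields in the Grothendieck ring
\[
 [F_i] = \sum_{w \in W^{P_i}} \bL^{\ell(w)} = P_{F_i}(\bL), \qquad P_{F_i}(q) = \frac{P_W(q)}{P_{W_{P_i}}(q)},
\]
where $P_W(q) = \sum_{w \in W} q^{\ell(w)}$ and $P_{W_{P_i}}(q)$ is the analogous polynomial for the Weyl group $W_{P_i}$ of the Levi factor of $P_i$.

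Next I would compute these polynomials explicitly. The degrees of the fundamental invariants of the Weyl group of type $G_2$ are $2$ and $6$, so
\[
 P_W(q) = \frac{(q^2 - 1)(q^6 - 1)}{(q - 1)^2} = (1 + q)\lb 1 + q + q^2 + q^3 + q^4 + q^5 \rb.
\]
Both $P_1$ and $P_2$ are maximal parabolic subgroups, obtained by crossing out a single node of the Dynkin diagram \Fzero; accordingly each $W_{P_i}$ is generated by a single one of the simple reflections $s_1, s_2$ and is therefore of type $A_1$, so that $P_{W_{P_1}}(q) = P_{W_{P_2}}(q) = 1 + q$. Dividing gives
\[
 [F_1] = P_{F_1}(\bL) = 1 + \bL + \bL^2 + \bL^3 + \bL^4 + \bL^5 = P_{F_2}(\bL) = [F_2],
\]
which is the desired equality. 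Equivalently, one checks by hand in the dihedral group $W$ of order $12$ that the minimal-length representatives in each of $W/W_{P_1}$ and $W/W_{P_2}$ consist of exactly one element of each length $0, 1, \dots, 5$.

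I expect no genuine obstacle in this argument; the only points requiring care are the standard facts that the Bruhat cells are affine spaces and that the decomposition is an honest locally closed stratification, together with the elementary bookkeeping that both Levi Weyl groups are of type $A_1$. It is worth stressing that the coincidence $[F_1] = [F_2]$ is not forced by an isomorphism $F_1 \cong F_2$: since the Dynkin diagram of type $G_2$ admits no nontrivial automorphism exchanging its two nodes, $F_1$ and $F_2$ are genuinely distinct homogeneous spaces, and indeed the numerical invariants recorded after \eqref{eq:definition_of_X_and_Y} already distinguish the associated pair $(X, Y)$. Thus the equality of classes is a purely motivic phenomenon, coming solely from the equality of the two Poincaré polynomials.
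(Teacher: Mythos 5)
Your proof is correct and follows essentially the same route as the paper: both rest on the Bruhat decomposition of $F_i = G/P_i$ into Schubert cells indexed by $W^{P_i}$, giving $[F_i] = \sum_{w \in W^{P_i}} \bL^{\ell(w)} = 1 + \bL + \bL^2 + \bL^3 + \bL^4 + \bL^5$. The only cosmetic difference is that you obtain the length count via the factorization of Poincar\'e polynomials (invariant degrees $2$ and $6$), whereas the paper simply lists the six minimal coset representatives of each length $0,\dots,5$ explicitly --- a check you also mention.
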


\begin{proof}
This follows from the Bruhat decomposition 
for rational homogeneous varieties
(see e.g.~\cite[Proposition 1.3]{MR1092142})
and the fact that
there is a bijection
$
 b \colon
 W ^{ P _{ 1 } }
 \xrightarrow[]{\simeq}
 W ^{ P _{ 2 } }
$
which respects the lengths of the elements.
Here
\begin{equation}
 W ^{ P _{ i } }
 =
 \lc w \in W |
 w \ \mbox{is the shortest element in} \ w W _{ P _{ i } }
 \rc,
\end{equation}
where
$
 W _{ P _{ i } }
$
is the Weyl group of
$
 P _{ i }
$.

More explicitly,
%via the isomorphism
%$
% W \simeq D _{ 12 },
%$
the subgroup
$
 W _{ P _{ i } }
$
is identified with
$
 \la s _{ i } \ra
 =
 \lc e, s _{ i } \rc
$
and the subsets
$
 W ^{ P _{ 1 } }
$
and
$
 W ^{ P _{ 2 } }
$
are identified with
\begin{equation}\label{eq:WP1}
 \lc e, s _{ 2 }, s _{ 1 } s _{ 2 }, s _{ 2 } s _{ 1 } s _{ 2 }, s _{ 1 } s _{ 2 } s _{ 1 } s _{ 2 },
 s _{ 2 } s _{ 1 } s _{ 2 } s _{ 1 } s _{ 2 }\rc
\end{equation}
and
\begin{equation}\label{eq:WP2}
 \lc e, s _{ 1 }, s _{ 2 } s _{ 1 }, s _{ 1 } s _{ 2 } s _{ 1 }, s _{ 2 } s _{ 1 } s _{ 2 } s _{ 1 },
 s _{ 1 } s _{ 2 } s _{ 1 } s _{ 2 } s _{ 1 } \rc,
\end{equation}
respectively.
The bijection
$
 b
$
sends the
$
 j
$-th member of
\eqref{eq:WP1} to that of
\eqref{eq:WP2}.
With these descriptions, we can in fact show
\begin{equation}
 [ F _{ 1 } ] =
 [ F _{ 2 } ] =
 1 + \bL + \bL ^{ 2 } + \bL ^{ 3 } + \bL ^{ 4 } + \bL ^{ 5 } =
 [ \bP ^{ 5 } ].
\end{equation}
\end{proof}

\begin{proof}[Proof of \pref{th:main_theorem}]
Since
$
 X
$
and
$
 Y
$
are non-isomorphic
Calabi--Yau 3-folds of Picard number one, they are not birationally equivalent.
Hence are not stably birational by the uniqueness of the MRC fibration, so that
$
 [ X ] \neq [ Y ]
$.

In order to show
\eqref{eq:main_formula}, note that
\pref{lm:structure_of_the_birational_maps} implies
\begin{equation}
 \begin{split}
 [ D ]
 =
 [ D \setminus \pi _{ 1 } ^{ - 1 } ( X ) ]
 +
 [ \pi _{ 1 } ^{ - 1 } ( X ) ]
 =
 [ F _{ 1 } \setminus X ]
 +
 \lb 1 + \bL \rb \cdot [ X ]
 =
 [ F _{ 1 } ] + \bL \cdot [ X ]
 \end{split}
\end{equation}
and similarly
\begin{equation}
  [ D ]
 =
 [ F _{ 2 } ] + \bL \cdot [ Y ].
\end{equation}
Now the conclusion follows from
\pref{pr:F1_and_F2_have_the_same_motive}.
\end{proof}

\bibliographystyle{amsalpha}
\bibliography{annihilator}
%\bibliography{mainbibs}

\end{document}